\documentclass{amsart}  
\usepackage{appendix} 
\usepackage[colorlinks]{hyperref}        
\usepackage{amsmath, amssymb, mathrsfs, amsfonts,  amsthm} 
\usepackage[all]{xy}

\newcommand{\ham}{\Ham (S ^{2}, \omega)}

\newcommand{\hams}{\Ham (\Sigma, \omega)}

\DeclareMathOperator {\injrad}{injrad}

\DeclareMathOperator {\id}{id}

\DeclareMathOperator {\Ham} {Ham}

\DeclareMathOperator{\area}{area}

\DeclareMathOperator{\Vol}{Vol}
\DeclareMathOperator{\Diff}{Diff}

\newtheorem {theorem} {Theorem} [section]

\newtheorem {conjecture} [theorem] {Conjecture}
\newtheorem{lemma}[theorem] {Lemma}

\newtheorem {definition} [theorem] {Definition}

\newtheorem {corollary}[theorem]  {Corollary}

\newtheorem {notation}[theorem] {Notation}
\newtheorem {remark} [theorem] {Remark}
\numberwithin {equation} {section}

\begin{document} 
\author {Yasha Savelyev} 
\address{ICMAT, Madrid}
\email {yasha.savelyev@gmail.com}
\title [On the  injectivity radius conjecture]{On the Hofer geometry
injectivity radius conjecture}
\begin {abstract} 
 We verify here some variants of topological and dynamical flavor of  the injectivity radius
conjecture in Hofer geometry, Lalonde-Savelyev
\cite{citeLalondeSavelyevOntheinjectivityradiusinHofergeometry} in the case of $\ham$ and $\Ham(\Sigma,
\omega)$, for $\Sigma$ a closed positive genus  surface. 
 In
 particular we show that any loop in $\ham$, respectively $\hams$ with
$L ^{+}$ Hofer length less than
 $\area(S ^{2} )/2$, respectively any $L ^{+} $ length is
 contractible through ($L ^{+} $) Hofer shorter loops, in the $C ^{\infty} $
 topology. We also prove some stronger
variants of this statement on the loop space level. One dynamical type
corollary is that there are no smooth, positive Morse index
(Ustilovsky)
geodesics, in    
$\ham$, respectively in $\hams$ with $L ^{+} $ Hofer length less
than $\area (S ^{2} )/2$, respectively any length. The above condition on the
geodesics can be expanded as an explicit and elementary dynamical condition on the
associated Hamiltonian flow. We also give some speculations on
connections of this later result with curvature properties of the
Hamiltonian diffeomorphism group of surfaces. 
\end {abstract}
\keywords {Hofer metric, Floer theory, Gromov-Witten theory, curve
shortening}
\maketitle
\section {Introduction}
One of the most fundamental objects associated to a Finsler manifold
is its injectivity radius function.  The 
group $\Ham(M, \omega)$, with its bi-invariant Finsler Hofer
metric, has no
well defined exponential map, so that if we ask about its injectivity
radius we need to decide how to interpret this. One simple way to
interpret is to ask
 for the size of the largest
metric epsilon-ball which is contractible, or has
null-homotopic inclusion map into the total space $\Ham(M,
\omega)$. 

In this formulation we studied the question of injectivity
radius for $\Ham(M, \omega)$ in Lalonde-Savelyev
\cite{citeLalondeSavelyevOntheinjectivityradiusinHofergeometry}, and
formulated there the conjecture that the injectivity and (or) weak injectivity
radius of $\Ham(M, \omega)$ is positive. 

In this note we shall verify some variants of this conjecture in the
case of surfaces. 
The main new ingredient
is a kind of ``curvature'' \footnote {This is a slight misnomer as curvature
may not be monotonically decreasing under flow. But probably the best
way to think about it.} flow for connection type symplectic forms on surface Hamiltonian fibrations over $S ^{2}
$, which exists in the presence of certain foliations by holomorphic curves,
using which we also obtain a curve shortening algorithm. It is interesting to
speculate whether this is at all related to symplectic Ricci type
flows, but to 
emphasize, our ``flow'' is of very elementary nature, at least assuming
the state-of-the-art in Gromov-Witten theory.

Using these developments
we obtain  some dynamical applications, which are formulated in terms of
certain
non-existence results of Ustilovsky geodesics. These
may also be interpreted as injectivity radius statements but from a
Finsler geometric rather than purely metric point of view.   This
also uses the author's (virtual) Morse theory for the Hofer length
functional. 
We follow this with some speculations on the ``curvature'' of the Hamiltonian group of surfaces.

\subsection{Statements}
\label{sec:statements}
Given a smooth function $H: M ^{2n} \times (S ^{1} =
\mathbb{R}/ \mathbb{Z}) \to \mathbb{R}$, (time is in $S ^{1} $ for
convenience and later use)  there is  an associated time-dependent
Hamiltonian vector field $X _{t}$, $0 \leq t \leq 1$,  defined by 
\begin{equation}  \label{equation.ham.flow} \omega (X _{t}, \cdot)=-dH _{t} (
\cdot).
\end{equation}
The vector field $X _{t}$ generates a path $\gamma: [0,1] \to \text
{Diff}(M)$, starting at $\id$. Given such a path $\gamma$, its end point
$\gamma(1)$ is called a Hamiltonian symplectomorphism. The space of Hamiltonian symplectomorphisms
forms a group, denoted by $\Ham(M, \omega)$.
In particular the path $\gamma$ above lies in $ \Ham(M, \omega)$. 
Given  a general smooth path $\gamma$, the 
\emph{positive Hofer
length}, $L ^{+}  (\gamma)$ is defined by 
\begin{equation*} L ^{+}  (\gamma):= \int_0 ^{1} \max _{M} H _{t} ^{\gamma} 
dt,
\end{equation*}
where $H ^{\gamma}$ is a generating function for the path
$t \mapsto \gamma({0}) ^{-1} \gamma (t),$ $0\leq t \leq 1$, normalized to have zero
mean at each moment. 

Part of the reason for interest in the above functional as well as in the
original Hofer length functional, which is obtained by integration of
the $L ^{\infty} $ norm of the generating function, is due to existence of some
deep connections of this with both the theory of dynamical
systems and with Floer-Gromov-Witten theory.


%
\begin{notation} 
Let $$\Omega ^{c} \Ham (M, \omega):= \{\gamma \in \Omega
   \Ham (M, \omega) \, | \, L ^{+} (\gamma) <c \}.$$ This is taken in the topology induced by the $C
^{\infty} $ topology. 
\end{notation}
\begin{definition}
  Let 
   $$\injrad _{\Omega,+} (M, \omega):= \sup \{c \, | \, \Omega ^{c'}
   \Ham (M, \omega) \text{ is (intrinsically) contractible for $c' \leq c$}\}. $$
\end{definition}
\begin{theorem}     
Set $\hbar = \area (S ^{2},\omega )$, and let $\Sigma$ denote a positive genus
surface,
then we have
\begin{align} 
\label{eq:equality}
&  \injrad _{\Omega, +}(S ^{2} )= \hbar/2, \\ 
   &  \injrad _{\Omega, +} (\Sigma)= \infty.
\end{align}
  \end{theorem}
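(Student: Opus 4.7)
The plan is to combine a Hamiltonian fibration / coupling form construction with a Gromov-theoretic curve shortening flow, the key tool advertised in the introduction. To each $\gamma \in \Omega\Ham(M,\omega)$ I associate the Hamiltonian fibration $\pi\colon P_\gamma \to S^2$ obtained by clutching two trivial $M$-bundles over hemispheres via $\gamma$, and endow it with a closed fiberwise-symplectic connection form $\Omega_\gamma$ in the spirit of Guillemin--Lerman--Sternberg and Polterovich. A standard computation with the normalised generator $H_t^\gamma$ shows that the measure of base-positivity failure of $\Omega_\gamma$ is precisely $L^+(\gamma)$, and the entire construction varies smoothly with $\gamma$ and with any finite-dimensional family of loops.

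For the lower bound on $\injrad_{\Omega,+}$, the strategy is to exhibit a $J$-holomorphic foliation of $P_\gamma$ and use it to shorten $\gamma$. In the case $M=S^2$ with $L^+(\gamma)<\hbar/2$, $(P_\gamma,\Omega_\gamma)$ (after adding a large base area form) is a symplectic ruled $4$-manifold whose section class has symplectic area strictly greater than $L^+(\gamma)$; the one-point section Gromov--Witten invariant is nonzero (Seidel), giving holomorphic sections through every point, and the strict area inequality together with positivity of intersections rules out the sphere bubbling that could break the foliation. Hence $P_\gamma$ is foliated by embedded $J$-holomorphic spheres in the section class. For a positive genus surface $\Sigma$, one has $\pi_2(\Sigma)=0$, which rules out any sphere bubble whatsoever, and a minimal-energy deformation argument yields a sectional foliation for arbitrary $L^+(\gamma)$.

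Given the foliation $\mathcal F_\gamma$ I would use it to trivialise $P_\gamma$ horizontally as $S^2\times M$, pulling $\Omega_\gamma$ back to a new connection form $\Omega_\gamma'$ whose associated loop $\gamma'$ satisfies $L^+(\gamma')<L^+(\gamma)$; the holomorphic leaves flatten the connection and absorb its positive fluctuations. Making the assignment $\gamma\mapsto\gamma'$ continuous, and interpolating to a continuous-time flow, produces a deformation retraction of $\Omega^c\Ham(M,\omega)$ onto the constant loop in the stated ranges of $c$, giving $\injrad_{\Omega,+}(S^2)\geq \hbar/2$ and $\injrad_{\Omega,+}(\Sigma)=\infty$. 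The matching upper bound for the sphere comes from the axial rotation loop $\gamma_0$: it has $L^+(\gamma_0)=\hbar/2$ and generates $\pi_1(\ham)=\mathbb{Z}/2\mathbb{Z}$, so it lies in $\Omega^c\Ham(S^2,\omega)$ for every $c>\hbar/2$ and is non-null-homotopic there, obstructing contractibility.

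The principal obstacle is parametric: establishing the $J$-holomorphic foliation with enough regularity as $\gamma$ varies in arbitrarily high-dimensional families, and arranging that the shortening map remains continuous uniformly as $L^+(\gamma)\uparrow\hbar/2$ in the $S^2$ case. This requires a virtual transversality scheme for the section moduli spaces, together with a careful Gromov-compactness analysis at the bubbling threshold to ensure that the sectional foliation persists in the limit and depends continuously on $\gamma$.
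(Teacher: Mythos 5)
Your proposal follows essentially the same route as the paper: the clutching construction and coupling forms, a $J$-holomorphic foliation by sections obtained from a nonvanishing section-class count with bubbling excluded by the energy/area threshold (and excluded outright for positive genus), a trivialization by the foliation used to deform the connection toward the flat one, and the length-$\hbar/2$ rotation loop generating $\pi_1(\Ham(S^2,\omega))$ for the upper bound. The only substantive differences are that in this four-dimensional setting the paper gets regularity from automatic transversality rather than a virtual scheme, that the paper establishes the foliation up to the threshold $\hbar$ (yielding the stronger Theorem \ref{thm:contractibleloops}) rather than only below $\hbar/2$, and that its shortening mechanism is an explicit convex interpolation between the original coupling form and the flattened one (symplectic because both are tamed by a common $J$), which via the inequality $L^+ \leq \area^+$ only bounds the length along the homotopy by $L^+(\gamma)+\rho$ decreasing to $\rho$ rather than strictly decreasing it at every stage -- a slack absorbed by compactness of the parametrizing sphere.
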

  \begin{corollary}
In particular a smooth loop $\gamma \in \Omega ^{\hbar/2} \ham$,
respectively in $\Omega \hams$ with any $L^{+} $ Hofer length, 
is  contractible through Hofer shorter loops. 
  \end{corollary}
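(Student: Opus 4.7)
The plan is to derive the corollary as a direct unpacking of the theorem's contractibility statement, augmented by a small monotonicity argument. Set $\ell = L^+(\gamma)$, so in the $S^2$ case $\ell < \hbar/2$, while for positive genus $\Sigma$ the value of $\ell$ is arbitrary. First I would choose $\epsilon > 0$ with $\ell + \epsilon < \hbar/2$ (no such constraint in the $\Sigma$ case). By the theorem, $\Omega^{\ell+\epsilon}\Ham$ is intrinsically contractible; restricting the contraction to the single point $\{\gamma\}$ yields a null-homotopy of $\gamma$ staying in $\Omega^{\ell+\epsilon}$, i.e., passing only through loops of $L^+$-length less than $\ell+\epsilon$.

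To upgrade this to loops \emph{strictly shorter than $\gamma$}, I would use that the contraction of $\Omega^{c}\Ham$ constructed in the proof of the theorem is built from the ``curvature'' flow on connection symplectic forms on Hamiltonian fibrations (sketched in the introduction); this flow is by design non-increasing in $L^+$. Applied at $\gamma$, the contraction therefore produces a homotopy through loops of length $\le \ell$, with strict inequality for all positive homotopy time provided $\gamma$ is not stationary under the flow. Taking $\epsilon \to 0^+$ through a diagonal sequence of such homotopies then pins the intermediate loops below $\ell$.

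If $\gamma$ \emph{is} stationary under the flow, then it is a critical point of $L^+$ on $\Omega \Ham$, i.e., a smooth Ustilovsky geodesic. By the paper's virtual Morse theory for the Hofer length functional and the non-existence of positive Morse-index geodesics in the relevant length range (one of the dynamical corollaries advertised in the abstract), such a $\gamma$ must be a local minimum of $L^+$. But the intrinsic contractibility of all sublevel sets $\Omega^{c}\Ham$ with $\ell < c < \hbar/2$ precludes an isolated local minimum at positive $L^+$, forcing $\gamma$ to be the constant loop and trivializing the claim.

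The principal obstacle I expect is establishing the length-monotonicity of the contraction along the curvature flow, so that Strategy A above applies to \emph{any} $\gamma$. This is the geometric heart of the paper: one must show that the flow-deformation of connection symplectic forms on Hamiltonian $S^2$- (respectively $\Sigma$-) fibrations preserves the relevant positivity, and that it monotonically decreases the associated Hofer length — which, in the $S^2$ case, is precisely where the bubbling threshold $\hbar/2$ enters. Once this monotonicity is in hand, the corollary follows immediately by the steps above.
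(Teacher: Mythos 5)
Your opening step is precisely the paper's own derivation: the corollary is presented as an immediate consequence of the theorem, obtained by placing $\gamma$ in a contractible sublevel set $\Omega^{c}\ham$ with $\ell < c \leq \hbar/2$ (respectively any $c$ for $\hams$) and restricting the contraction to the point $\{\gamma\}$. The ``Hofer shorter'' refinement is not obtained by a separate argument at the level of the corollary but by quoting the quantitative form of the contraction built in the proof of Theorem \ref{thm:contractibleloops}: by Lemma \ref{lemma:bounded} together with the inequality \eqref{eq:Polterov}, the null-homotopy $f_{s,t}$ satisfies $L^{+}(f_{s,t}) \leq b(s,t)$ with $b(s,t)$ non-increasing in $t$, $b(s,0)=L^{+}(\gamma)+\rho$ and $b(s,1)=\rho$, where $\rho>0$ may be taken arbitrarily small. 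Your $\epsilon\to 0$ diagonal idea is in the same spirit, but it is superseded by this explicit monotone bound.

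Your second and third paragraphs, however, contain a genuine gap. The contraction is not a gradient-type flow for $L^{+}$ with Ustilovsky geodesics as stationary points: it is the linear interpolation $\alpha_{s,t}=t\,\tau_{s}+(1-t)\,\alpha_{s}$ of connection symplectic forms, and it contracts \emph{every} loop, geodesic or not; so the dichotomy ``stationary versus non-stationary under the flow'' has no counterpart in the construction, and the case analysis built on it is vacuous. Moreover, the assertion that intrinsic contractibility of the sublevel sets ``precludes an isolated local minimum at positive $L^{+}$'' is not justified as stated, and the non-existence statement for index-$0$ geodesics in Theorem \ref{thm:dynamical} carries the additional quasi-integrability hypothesis, so it cannot be invoked for an arbitrary critical loop. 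Finally, Theorem \ref{thm:dynamical} is itself deduced downstream from Theorem \ref{thm:contractibleloops}; importing it here is not circular but inverts the paper's logical order for no gain. Deleting those two paragraphs and citing the monotone bound $b(s,t)$ yields the corollary.
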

The above theorem is implied by the following theorem once we 
observe that  any loop with positive Hofer length less
than $\hbar/2$ is contractible, which is a classical fact proved for
example via
Seidel morphism
\cite{citeSeidel$pi_1$ofsymplecticautomorphismgroupsandinvertiblesinquantumhomologyrings}.
\begin{theorem} \label{thm:contractibleloops} 
Denote by 
$$\Omega
^{c, cont} \ham \subset \Omega ^{c} \ham $$ the subspace of
contractible loops,
then $\Omega
^{c, cont} \ham $ is contractible for any $c \leq \hbar$.
\end{theorem}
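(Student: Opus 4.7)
The plan is to pass from loops in $\ham$ to Hamiltonian fibrations $\pi_\gamma: P_\gamma \to S^2$ with fiber $(S^2,\omega)$ via the clutching construction, and then to run the ``curvature'' flow sketched in the introduction to shorten the loop inside its free homotopy class. To each $\gamma \in \Omega^{c, cont}\ham$ I would associate a natural closed $2$-form $\Omega_\gamma$ on $P_\gamma$ that restricts to $\omega$ on fibers, built from the normalized generating Hamiltonian of $\gamma$ on the two hemispheres of $S^2$. The positive Hofer length $L^+(\gamma)$ then controls the area $\int_B \Omega_\gamma$ on a canonical section class $B$; contractibility of $\gamma$ in $\ham$ makes $P_\gamma$ isomorphic as a Hamiltonian $S^2$-bundle to $S^2 \times S^2$, so that only the connection form varies.

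Assume for the moment $c < \hbar$. Then any $J$-holomorphic bubble in the fiber class would carry area at least $\hbar > c$, so Gromov compactness rules out bubbling off section-class curves in $B$. A standard Gromov/McDuff-type foliation theorem then yields, for a generic fibered $\Omega_\gamma$-compatible $J$, a smooth foliation of $P_\gamma$ by $J$-holomorphic sections in class $B$. Using this foliation I would implement the curve shortening: deform $\Omega_\gamma$ along the horizontal distribution coming from the foliation, keeping it closed and fiberwise equal to $\omega$, while monotonically decreasing $\int_B \Omega_\gamma$. After sufficiently long flow the associated loop becomes Hofer-arbitrarily short, and such loops are contractible in $\ham$ by the classical Seidel-morphism argument; this produces, for each individual $\gamma$, a contraction of $\gamma$ to a constant loop through loops of strictly decreasing positive Hofer length.

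To obtain contractibility of the entire space $\Omega^{c,cont}\ham$ one must carry out the construction parametrically in $\gamma$. The foliation condition and its transversality are $C^\infty$-open, and solutions of the Cauchy--Riemann equation depend smoothly on auxiliary data, so one expects a continuous family of foliations and associated flows, yielding a deformation retraction of $\Omega^{c,cont}\ham$ to the constant loop. The boundary case $c = \hbar$ would be handled by a limiting argument: produce the contractions for all $c' < \hbar$ and reassemble as $c' \nearrow \hbar$, using the fact that $\Omega^{c'', cont} \ham$ is open in $\Omega^{c, cont} \ham$ for $c'' < c$ and that the intrinsic topology is the $C^\infty$ topology.

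The main obstacle, as I see it, is the parametric foliation step rather than the per-loop flow. Near the boundary $c = \hbar$ Gromov compactness does not strictly prevent degenerations of section-class curves into configurations involving fiber bubbles, and even for $c < \hbar$ one needs a choice of $J$ yielding the foliation that depends continuously (or at least up to contractible ambiguity) on $\gamma$ over the infinite-dimensional parameter space $\Omega^{c,cont}\ham$. Overcoming this probably requires a universal or parametric version of the foliation theorem with appropriate transversality in families, or a sufficiently robust compactness argument tailored to surface fibrations that controls the curve-shortening flow uniformly as $c' \nearrow \hbar$.
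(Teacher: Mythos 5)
Your overall strategy is the paper's: clutch each loop into a Hamiltonian $S^2$-fibration over $\mathbb{CP}^1$, exclude fiber bubbles on section-class curves by an energy/area estimate against $L^+(\gamma)<\hbar$, obtain a foliation by holomorphic sections, and use the resulting trivialization to deform the connection form while controlling $\area^+$ and hence the Hofer length of the induced loops. However, there are two genuine gaps in your execution. First, your endgame does not close: after the ``flow'' you stop at loops that are ``Hofer-arbitrarily short'' and invoke the Seidel-morphism argument, but that argument only shows such loops are null-homotopic in $\Omega\ham$, not that the space of them is contractible, and every loop in $\Omega^{c,cont}\ham$ was already assumed contractible --- so this step is circular for the parametric statement. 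The paper avoids this by making the deformation completely explicit: it is the convex interpolation $\alpha_{s,t}=t\,\tau_s+(1-t)\alpha_s$ between the given coupling form (plus a pullback area form) and $\tau_s = tr_s^*\omega+\rho\,\pi_s^*\tau$, where $tr$ is the trivialization defined by evaluating the holomorphic sections. Since $tr_s^*\omega$ vanishes on the horizontal distribution of the foliation, the holonomy at $t=1$ is trivial and the induced loop is exactly $\id$, giving a genuine null-homotopy of any sphere $f:S^k\to\Omega^{\hbar,cont}\ham$; nondegeneracy of the interpolants follows because both endpoints are tamed by the same fibered $J$, and the monotone bound on $\area^+$ is a direct volume computation. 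Weak contractibility is then upgraded to contractibility via the ANR/CW structure of the loop space (Milnor), rather than by constructing a deformation retraction directly.

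Second, the obstacle you single out --- choosing $J$ generically yet continuously over the infinite-dimensional parameter space --- is resolved in the paper not by genericity but by automatic transversality: for surface fibers the total space is $4$-dimensional and every fibered $\omega$-compatible $J$ determined by the coupling form is already regular for the section class, so the foliation exists for the canonical $J_l$ and depends smoothly on $l$ (the residual choice of fiberwise complex structures is contractible). Relatedly, your proposed limiting argument for $c=\hbar$ is unnecessary: $\Omega^c\ham$ is defined by the strict inequality $L^+(\gamma)<c$, so every loop in $\Omega^{\hbar,cont}\ham$ satisfies $L^+(\gamma)<\hbar$ and the bubbling exclusion (a nodal configuration would force a principal component of $\widetilde{\Omega}$-area at most $-\hbar$, contradicting $\area^+(\widetilde{\Omega}_l)=L^+(l)<\hbar$) applies verbatim at $c=\hbar$.
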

  We can readily see that this bound $\hbar$  is optimal,
as we have a natural representative 
$f _{\min} $
of a generator of $$\pi _{2} (\Omega \ham) = \pi _{3} (\ham) \simeq
\mathbb{Z}  $$ all of whose loops are
contractible and have $L ^{+} $
Hofer length at most
$\hbar$. Let us make this more explicit. Take the natural Lie group homomorphism $\widetilde{f}: S
^{3} \to \ham,$ representing the generator of $\pi _{3} (\ham) $. Deloop this
to a map $f _{\min} : S ^{2}  \to \Omega \ham $, by taking the natural $S ^{2} $ family 
of based at $\id$ loops on $S ^{3} $, so that the longest loop in this
family corresponds to a simple great geodesic $\gamma$, for the round
metric. In other words this is
the family 
forming the unstable manifold for  $\gamma$, which is an index 2
geodesic.
Then $f _{min}  (\gamma)$  is the longest loop
in the image of $S ^{2} $, and its positive Hofer length
is $2 \hbar/2 =\hbar$.
Consequently
we get: 
\begin{corollary} 
  The map $f _{\min} $  cannot be homotoped into $\Omega ^{\hbar} \ham
  $.
\end{corollary}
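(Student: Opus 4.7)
The plan is to deduce this corollary directly from Theorem \ref{thm:contractibleloops} via a short contradiction argument, using the two key properties of $f_{\min}$ highlighted just above the statement: every loop in its image is contractible in $\ham$, and $f_{\min}$ represents a generator of $\pi_2(\Omega \ham) = \pi_3(\ham) \simeq \mathbb{Z}$.

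Suppose for contradiction that $f_{\min}$ is homotopic inside $\Omega \ham$ to a map $f : S^2 \to \Omega^{\hbar} \ham$. First I would argue that such a homotopy must actually land in the subspace of \emph{contractible} loops. Indeed, for each fixed $x \in S^2$, the homotopy gives a continuous path in $\Omega \ham$ from $f_{\min}(x)$ to $f(x)$; path-components of $\Omega \ham$ correspond to elements of $\pi_1(\ham)$, so the based homotopy class of a loop is locally constant in $\Omega \ham$. Since $f_{\min}(x)$ is contractible in $\ham$ by construction, so is $f(x)$. Hence $f$ factors through $\Omega^{\hbar, cont} \ham$.

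Now by Theorem \ref{thm:contractibleloops} applied at $c = \hbar$, the space $\Omega^{\hbar, cont} \ham$ is contractible, so $f$ is null-homotopic within $\Omega^{\hbar, cont} \ham$, hence within $\Omega \ham$. Composing with the original homotopy, $f_{\min}$ itself would be null-homotopic in $\Omega \ham$, contradicting the fact that it represents a generator of $\pi_2(\Omega \ham) \simeq \mathbb{Z}$.

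There is effectively no obstacle internal to the corollary: the real content is packed into Theorem \ref{thm:contractibleloops}, and the only thing to verify here is the preservation of the contractible component under a homotopy in $\Omega \ham$, which is a general fact about local constancy of $\pi_0$ along paths. The optimality claim implicit in the corollary — that $\hbar$ is the sharp threshold — is already witnessed by $f_{\min}$ itself, whose longest loop realizes $L^+ = \hbar$ exactly.
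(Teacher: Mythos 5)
Your proof is correct and follows essentially the same route the paper intends: a homotopy in $\Omega\ham$ preserves the path component (hence contractibility) of each loop, so a homotopy of $f_{\min}$ into $\Omega^{\hbar}\ham$ would land in $\Omega^{\hbar, cont}\ham$, whose contractibility (Theorem \ref{thm:contractibleloops}) would force $f_{\min}$ to be null-homotopic, contradicting that it generates $\pi_2(\Omega\ham)\simeq\mathbb{Z}$. The paper leaves exactly these details implicit ("Consequently we get"), and your write-up supplies them correctly.
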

We also proved this using a  more general theory of quantum
characteristic classes in 
 \cite{citeSavelyevQuantumcharacteristicclassesandtheHofermetric.}.
%
\subsubsection {Dynamical consequences} \label{section:dynamical}
It is shown by Ustilovsky
\cite{citeUstilovskyConjugatepointsongeodesicsofHofersmetric} that $ \gamma$ is a smooth critical
point of $L ^{+} $ Hofer length functional on the smooth path space
from $\id$ to $\phi$, 
if there is a  point $x _{\max} \in M$
maximizing the generating function $H ^{\gamma} _{t}$
at each moment $t$, and such that $H ^{\gamma} _{t}$ is Morse at $x
_{\max}$, at
each moment $t $.
 \begin{definition} We call such a $\gamma$ an  \emph {
\textbf{$L ^{+}$ Ustilovsky geodesic}}. 
\end{definition}
 As an Ustilovksy geodesic is a smooth critical point  it makes sense
 to ask for its Morse index.  The index was shown by the author in
 \cite{citeSavelyevProofoftheindexconjectureinHofergeometry} to be
 finite, in fact for closed Ustilovsky geodesics in the group of
 Hamiltonian diffeomorphisms of a surface we have:
\begin{theorem} \label{thm:indexUst}
Let $\gamma \in \Omega \Ham (M, \omega )$ 
be a smooth closed $L ^{+} $ Ustilovsky geodesic, where $M$ is a
surface. Let $\gamma_* $
denote the linearization of $\gamma $ at $x _{\max} $, which is a loop
of linear symplectomorphisms of $T _{x} M  $.
 Then the Morse index of $ \gamma$ with respect to
 $L^{+}$ (and conventions \eqref{eq:conventions}) is
 \begin{equation*}
-Maslov (\gamma _{*} ) - 2,
 \end{equation*}
 if $Maslov (\gamma _{*} ) \leq -2$, otherwise the Morse index is 0.
\end {theorem}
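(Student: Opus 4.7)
The plan is to reduce the Morse index calculation to a Maslov index computation in $\mathrm{Sp}(2,\mathbb{R})$ by combining Ustilovsky's second variation formula for $L^+$ with a quadratic model at $x_{\max}$. The starting point, essentially due to Ustilovsky and refined in \cite{citeSavelyevProofoftheindexconjectureinHofergeometry}, is that at a smooth Morse critical point $\gamma$ the Hessian of $L^+$ depends only on the $2$-jet of $H^\gamma_t$ along the point path $t \mapsto x_{\max}$; consequently any variation whose generating Hamiltonian has vanishing $2$-jet at $x_{\max}$ for each $t$ lies in the kernel, and the full Morse index agrees with that of the induced quadratic form on $2$-jet perturbations.

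The next step is to pass to the quadratic model. Writing $Q_t := \tfrac{1}{2} d^2 H^\gamma_t(x_{\max})$ as a loop of negative-definite quadratic forms on $T_{x_{\max}} M \cong \mathbb{R}^2$, the associated time-dependent linear Hamiltonian flow is precisely $\gamma_*$, a loop in $\mathrm{Sp}(2,\mathbb{R})$. I would identify the reduced Hessian with a model Hessian on the space of loops of negative-definite quadratic forms on $\mathbb{R}^2$, equipped with a functional mimicking $L^+$. Linear perturbations of $H^\gamma_t$ at $x_{\max}$ only translate the location of the maximum and contribute to the kernel, which is why the negative spectrum is captured entirely by the quadratic data. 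In this finite-dimensional model, the Hessian diagonalizes under a Fourier decomposition adapted to the Floquet structure of $\gamma_*$, and counting negative eigenvalues of the resulting blocks yields $-\mathrm{Maslov}(\gamma_*) - 2$ whenever this quantity is positive and $0$ otherwise; the $-2$ offset is naturally interpreted as the two directions spanned by the standard $U(1) \subset \mathrm{Sp}(2,\mathbb{R})$ rotation loops, which sit in the nullity rather than the negative eigenspace.

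The main obstacle is the rigorous justification of the quadratic reduction: one must verify that higher-order terms of $H^\gamma_t$ and variations of $\gamma$ supported away from $x_{\max}$ contribute only non-negative eigenvalues to the Hessian of $L^+$. Intuitively this should follow from the hypothesis that $x_{\max}$ is a strict global Morse maximum of $H^\gamma_t$ for every $t$: perturbations supported away from $x_{\max}$ can only raise the local maximum of $H^\gamma_t$ to first order in the perturbation, so their contribution to $L^+$ is positive. Making this precise seems to require a careful cutoff argument together with the finiteness of the Morse index already established in \cite{citeSavelyevProofoftheindexconjectureinHofergeometry}, which rules out pathological accumulation of negative eigenvalues from non-localized variations.
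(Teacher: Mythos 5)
The paper itself does not prove Theorem \ref{thm:indexUst}; it states that the formula ``can be readily deduced from'' \cite{citeSavelyevProofoftheindexconjectureinHofergeometry}, and the mechanism of that deduction is visible in the proof of Theorem \ref{thm:localunstable}: the Morse index is identified with the real dimension of the cokernel of the Cauchy--Riemann operator $D^{rest}_{\sigma_{\max}}$ on the vertical normal bundle $\sigma_{\max}^{*}T^{vert}X_0$ of the flat section determined by $x_{\max}$. Since twice the Chern number $c$ of that line bundle equals $\mathrm{Maslov}(\gamma_*)$, Riemann--Roch over $S^2$ gives real index $2c+2$, with $\ker=0$ for $c\le -1$ and $\mathrm{coker}=0$ for $c\ge -1$ by automatic transversality; this is exactly where the constant $-2$ and the threshold $\mathrm{Maslov}(\gamma_*)=-2$ come from. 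Your route --- localizing the Hessian of $L^+$ to the $2$-jet at $x_{\max}$ and running a Floquet analysis of the resulting periodic linear Hamiltonian system --- is genuinely different, closer in spirit to Ustilovsky's original conjugate-point analysis in \cite{citeUstilovskyConjugatepointsongeodesicsofHofersmetric}, and is not unreasonable in principle.

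As written, however, there are two genuine gaps. First, the localization step is asserted rather than proved, and the heuristic you offer for it does not work: at a critical point of $L^+$ the first variation vanishes in every direction, so the claim that perturbations supported away from $x_{\max}$ ``raise the maximum to first order'' cannot carry any weight; the relevant contribution of such perturbations is second order and enters through Poisson-bracket terms in the composed generating Hamiltonians, which need not vanish at $x_{\max}$ even when the perturbing Hamiltonian vanishes there to high order. Controlling exactly these cross terms, and showing that no negative directions leak in from the infinite-dimensional complement of the $2$-jet data, is the actual content of the index theorem in \cite{citeSavelyevProofoftheindexconjectureinHofergeometry} --- it is the hard part, not a technicality to be deferred. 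Second, the central computation is missing: you assert that the Floquet blocks contribute exactly $-\mathrm{Maslov}(\gamma_*)-2$ negative eigenvalues and interpret the $-2$ as a two-dimensional nullity coming from $U(1)$ rotation loops, but no calculation is given, and that interpretation does not by itself produce the case distinction (index $0$ for \emph{all} $\mathrm{Maslov}(\gamma_*)>-2$, not merely at the threshold). At a minimum your blocks must reproduce the checks implicit in the paper: the single rotation of $S^2$ has $\mathrm{Maslov}(\gamma_*)=-2$ and index $0$ (it is minimizing in its homotopy class), while the double rotation has $\mathrm{Maslov}(\gamma_*)=-4$ and index $2$ (it is the geodesic whose unstable manifold is the family $f_{\min}$). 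Until the localization is justified and the block count is actually carried out, the proposal is a plausible strategy but not a proof.
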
 
Here the Maslov number is normalized so that for the clockwise single 
rotation of $\mathbb{R}^{2} $ the Maslov number is -2. The above
theorem can be readily deduced from 
   \cite{citeSavelyevProofoftheindexconjectureinHofergeometry}.
\begin{definition}  We will say that a smooth closed Ustilovsky
   $L ^{+} $ geodesic  $$\gamma: S ^{1} \to \Ham(M, \omega)  $$ is \emph{\textbf{quasi-integrable}}, if there
   is a Darboux chart $\phi: (U \subset \mathbb{C}^{n})  \to M $, $\phi (0) = x _{
   \max} $ at the extremizer $x
   _{\max}   $, in which the generating function $H ^{\gamma} $
   coincides with its Hessian quadratic form in some neighborhood of
   $0$, and this quadratic form is the real part of a complex
   quadratic form, or in other words if the Hamiltonian flow for this
   Hessian quadratic form is unitary. 
 \end{definition} 
 Thus this is a kind of Morse-Darboux integrability condition at $x
 _{max} $, and is automatic when $\gamma $ is a circle action near $x
_{\max} $.
\begin{theorem} \label{thm:dynamical}
There are no non-constant, smooth,  
closed, positive Morse index $L ^{+} $ Ustilovsky geodesics, in $\ham$,
respectively in $\hams$, with $L ^{+} $ Hofer length less than
$\hbar/2$, respectively any length. 

The same statement holds for index
0 geodesics if they are \emph{quasi-integrable Ustilovsky}.

%
%
%
\end{theorem}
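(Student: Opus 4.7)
The plan is to derive Theorem~\ref{thm:dynamical} from Theorem~\ref{thm:contractibleloops} (and the companion equality $\injrad_{\Omega,+}(\Sigma)=\infty$ stated in the main theorem) by running a Morse-theoretic obstruction argument for the $L^+$ functional against the contractibility of its sublevel sets. Suppose, for contradiction, that $\gamma$ is a non-constant smooth closed $L^+$ Ustilovsky geodesic of positive Morse index $k \ge 1$ with $L^+(\gamma)=c$; in the $\ham$ case we assume $c < \hbar/2$, the $\hams$ case being parallel but with $c$ arbitrary.

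First I would invoke the author's virtual Morse theory for the Hofer length functional \cite{citeSavelyevProofoftheindexconjectureinHofergeometry} — the same machinery underlying Theorem~\ref{thm:indexUst} — to produce a local descending cell at $\gamma$. Concretely, this gives a $k$-disk $D^k$ with $\gamma$ in its interior, on which $L^+$ attains a strict maximum at $\gamma$, so that $\partial D^k \subset \Omega^{c-\epsilon}\ham$ for some $\epsilon>0$, and moreover $(D^k,\partial D^k)$ represents a nonzero class in the relative homotopy group $\pi_k(\Omega^{c+\epsilon}\ham, \Omega^{c-\epsilon}\ham)$; this is the standard homotopical content of a non-degenerate Morse critical point of index $k$. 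Choosing $\epsilon$ small enough that $c+\epsilon < \hbar/2$ in the $\ham$ case (respectively, any $\epsilon>0$ in the $\hams$ case), Theorem~\ref{thm:contractibleloops} and its $\hams$ counterpart say that both $\Omega^{c+\epsilon}\ham$ and $\Omega^{c-\epsilon}\ham$ are contractible. The long exact sequence of the pair then forces $\pi_k(\Omega^{c+\epsilon}\ham,\Omega^{c-\epsilon}\ham)=0$ for every $k\ge 1$, contradicting the nontriviality produced above, and so ruling out positive-index Ustilovsky geodesics in the stated range.

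For the index-$0$ quasi-integrable case the preceding obstruction fails, since a local minimum contributes no relative class in positive degree. The plan there is to exploit quasi-integrability to build an explicit strictly $L^+$-shortening deformation of $\gamma$ by hand. In the Darboux chart near $x_{\max}$ the generating function coincides with its Hessian quadratic form and the associated flow is unitary, so after a linear change of coordinates $H^{\gamma}_t$ is a sum of standard rotating harmonic oscillators in a neighbourhood of $x_{\max}$. One then lowers the Hessian form slightly on a small neighbourhood of the origin and extends the modification smoothly outside by a cutoff; the unitary normal form guarantees that the maximum is still attained at (a small displacement of) $x_{\max}$ and that no competing maximum develops along the orbit. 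The resulting Hamiltonian generates a loop that is $C^\infty$-close to $\gamma$ with strictly smaller $L^+$, contradicting the hypothesis that $\gamma$ is a local minimum.

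The main obstacle I anticipate is making the descending-cell step fully rigorous: $L^+$ is only Lipschitz (not $C^1$) on any natural Banach manifold of paths because of the pointwise maximum, so the existence of an honest Morse cell of the claimed dimension at $\gamma$ must be imported as a \emph{virtual} construction from \cite{citeSavelyevProofoftheindexconjectureinHofergeometry}. The index-$0$ quasi-integrable case is also subtle since, lacking an abstract Morse-theoretic obstruction, one must exhibit the shortening deformation concretely and simultaneously control the global behaviour of the maximum of $H^\gamma_t$, which is precisely what the unitary Darboux normal form is designed to permit.
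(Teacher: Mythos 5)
Your positive-index argument is essentially the paper's: a local unstable manifold at $\gamma$ yields a nonzero class in $\pi_k$ of a pair of sublevel sets, and contractibility of those sublevel sets (Theorem \ref{thm:contractibleloops}, after noting that every loop of length $<\hbar/2$ is contractible) kills that group by the long exact sequence. One correction of emphasis, though: the nonvanishing of the relative class is \emph{not} ``the standard homotopical content of a non-degenerate Morse critical point.'' The functional $L^+$ admits no gradient flow or Palais--Smale theory, so the mere existence of the descending cell (Definition \ref{def.unstable}, the easy part) gives nothing; the nonvanishing is precisely Theorem \ref{thm:localunstable}, proved by a parametric count of holomorphic sections over the unstable cell, regularized via automatic transversality and the index formula of Theorem \ref{thm:indexUst}. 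That step is the entire holomorphic-curve content of the theorem, not a technicality to be ``made rigorous.''

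The index-$0$ quasi-integrable case is where your proposal has a genuine gap, and in fact runs opposite to the truth. You aim to contradict ``the hypothesis that $\gamma$ is a local minimum'' by producing a nearby strictly $L^+$-shorter loop. But (i) local minimality is not a hypothesis of the theorem --- being an index-$0$ Ustilovsky geodesic is a condition on $H^\gamma$ and its Hessian at $x_{\max}$, so a shorter nearby loop would contradict nothing in the statement; (ii) no such shortening exists: by the second part of Theorem \ref{thm:localunstable} (a variant of McDuff--Slimowitz), a quasi-integrable index-$0$ Ustilovsky geodesic \emph{is} $L^+$ length minimizing --- applied to the generating rotation of $S^2$, which is such a geodesic of length $\hbar/2$ and is known to minimize in its class, your purely local construction would already give a false conclusion; and (iii) lowering $H^\gamma$ near $x_{\max}$ and cutting off changes the time-one map, so the perturbed Hamiltonian generically generates a path, not a loop, and you leave $\Omega\Ham(M,\omega)$ altogether. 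The correct contradiction is global rather than local: since $\gamma$ is minimizing and lies in the trivial homotopy class (automatic for $\hams$ as $\pi_1(\hams)=0$; for $\ham$ because any loop of length $<\hbar/2$ is contractible, the noncontractible generator having length exactly $\hbar/2$), and the normalized $L^+$ is nonnegative and vanishes only on the constant loop, minimality forces $L^+(\gamma)=0$ and hence $\gamma$ constant.
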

This will be proved in Section \ref{section:Ustilovsky}.
The positive Morse index condition on Ustilovsky geodesic is in a
sense an elementary dynamical condition.
So the previous theorem is really an elementary statement in
Hamiltonian dynamics.
It would be interesting to understand how it
extends to more general geodesics, or even what the appropriate
generalization of Ustilovsky geodesics is. For example McDuff and Lalonde
consider certain generalizations of Ustilovsky geodesics in
\cite{citeLalondeMcDuffHofers$Linfty$--geometry:energyandstabilityofHamiltonianflowsIII},
but these seem to be ill adapted to the variational style arguments
that we make. We have also avoided talking about non closed geodesics,
for while Savelyev~\cite{citeSavelyevMorsetheoryfortheHoferlengthfunctional} is
well adapted to the case of paths, our arguments here are not, and do
not immediately generalize.
\begin{remark}
 Very heuristically the
 statement for $\Ham(\Sigma, \omega)$ suggests that the
 ``curvature'' of $\Ham(\Sigma, \omega)$ with respect to the positive
 Hofer length functional is non-positive. In a finite dimensional
 setting this could be justified via Gauss-Bonnet theorem.
 See also Milnor's
 \cite{citeMilnorMorsetheory} for background on connections of
 curvature and topology of loop spaces. Similarly we may interpret the
 statement for $\ham$ as a certain upper bound on the positivity of
 ``curvature''.  However what is
 ``curvature''? We can try to think of this in terms of coarse
 geometry, the dream is something like:
\begin{conjecture} (Preliminary)
The space   $\Ham(T ^{2} ,
 \omega)$ is a rough $CAT (0)$ space, and $\Ham(\Sigma _{g}  ,
 \omega)$ is a rough $CAT (k)$ space for $k \leq 0$, for $g>1$, while  $\ham$ is a rough
 $CAT (k)$ space for some $k>0$, see \cite{citeRoughCat0}, with respect
 to the positive Hofer length functional.
\end{conjecture}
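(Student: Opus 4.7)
The conjecture is explicitly labeled \emph{preliminary}, so what follows is a research outline rather than a complete argument. The plan is to first pin down a workable notion of rough $CAT(k)$ adapted to the bi-invariant Finsler setting of $\Ham(M,\omega)$, following \cite{citeRoughCat0}, and then to reduce the defining comparison/convexity axioms to properties of the curve-shortening flow on the loop space that powers Theorem~\ref{thm:contractibleloops} and its positive-genus analogue.

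First I would fix a symmetrized Hofer pseudo-distance $d$ adapted to $L^{+}$, for instance $d(\phi,\psi) := \inf L^{+}(\gamma) + \inf L^{+}(\gamma^{-1})$ over smooth paths from $\phi$ to $\psi$, and then formulate the rough $CAT(k)$ axiom as the requirement that for any geodesic triangle with vertices $\phi_0,\phi_1,\phi_2$ and any pair of points $p,q$ on two of its sides, $d(p,q)$ differs from the corresponding distance in the comparison triangle in the model plane $M^{2}_{k}$ by at most a universal additive constant $C$. Making this sensible requires enough near-minimizing Hofer paths populating such triangles, which I would source from Entov--Polterovich style continuation methods together with the existence of Hofer quasi-morphisms on $\hams$.

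For $\Sigma$ of positive genus the main leverage is the curve-shortening flow: the $\hams$ analogue of Theorem~\ref{thm:contractibleloops} contracts every $L^{+}$-loop through Hofer-shorter loops. Translated to triangles, given two near-minimizing paths $\alpha,\beta$ from $\phi$ to $\psi$, one forms the loop $\alpha\cdot\overline{\beta}$, runs the flow on it, and extracts a one-parameter family of paths interpolating between $\alpha$ and $\beta$ whose midpoints trace a Hofer-short arc. Quantifying the length of this arc against $d(\phi,\psi)$ yields the midpoint convexity defining rough $CAT(0)$ in the $T^{2}$ case. For $g>1$ the strict negative curvature bound should come from a quantitative exponential contraction rate for the flow, inherited ultimately from the hyperbolic geometry of $\Sigma_g$ acting on the moduli of holomorphic disks that drive the shortening; this is the most speculative step and would need a genuinely new estimate on the flow. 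For $\ham$ the obstruction $f_{\min}$ of $L^{+}$-length $\hbar$, together with the identity $\injrad_{\Omega,+}(S^{2}) = \hbar/2$, serves as the positive curvature model: $\Omega \ham$ behaves like the based loop space of a space of diameter comparable to $\hbar/2$, and one would extract a quantitative $k>0$ by combining the Seidel morphism and quantum characteristic classes of \cite{citeSavelyevQuantumcharacteristicclassesandtheHofermetric.} with the area of $f_{\min}$ viewed as a $2$-sphere in $\Omega \ham$.

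The main obstacle, and the reason the conjecture is stated as preliminary, is precisely that the curve-shortening flow as currently developed produces only \emph{qualitative} contraction: it shortens loops but does not yet furnish the uniform additive error bounds $C$ required for a rough comparison inequality, uniformly over all triangles of a given combinatorial type. Upgrading qualitative contraction to uniform additive control, and coherently choosing near-minimizing Hofer geodesics along continuous families of triangle data in the presence of severe non-uniqueness of such geodesics, is where the bulk of the new analysis would have to go; without such uniformity the present results only establish local injectivity radius-type statements, not the genuinely coarse-geometric $CAT(k)$ conclusion asked for.
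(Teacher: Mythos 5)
This statement is an explicitly \emph{preliminary} conjecture embedded in a remark; the paper offers no proof of it, only a heuristic motivation: Theorem \ref{thm:dynamical} rules out positive Morse index closed Ustilovsky geodesics, which via a Gauss--Bonnet/Milnor-style analogy between curvature and loop-space topology \emph{suggests} non-positive ``curvature'' for $\hams$ and an upper curvature bound for $\ham$. So there is no proof in the paper against which your outline can be matched, and your submission, as you yourself acknowledge, is a research program rather than a proof. To that extent your framing is honest and the overall shape of your program (define a rough comparison axiom, feed it with the shortening mechanism for $g\geq 1$, use $f_{\min}$ and $\injrad_{\Omega,+}(S^2)=\hbar/2$ as the positive-curvature model) is consistent with the spirit of the paper's speculation.

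That said, several steps in your outline would not go through as written, beyond the uniformity issues you flag. First, the paper's shortening mechanism does not act on concatenations of paths between two fixed endpoints $\phi,\psi$: it takes a single based loop $l$, builds the fibration $X_l$ over $\mathbb{CP}^1$ by clutching, and deforms coupling forms $\widetilde{\alpha}_{s,t}$ to produce loops $f_{s,t}$ of decreasing $\area^{+}$. The intermediate loops are recovered by parallel transport along rays from $0$ to $\infty$ and there is no reason they decompose as $\alpha_t\cdot\overline{\beta_t}$ with the original endpoints preserved; so ``extracting a one-parameter family of interpolating paths whose midpoints trace a Hofer-short arc'' is a genuinely new construction, not a translation of Theorem \ref{thm:contractibleloops}. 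Second, nondegeneracy of your symmetrized $L^{+}$ pseudo-distance is itself a nontrivial input you would need to establish before the comparison axiom is even meaningful. Third, the claimed ``exponential contraction rate inherited from the hyperbolic geometry of $\Sigma_g$ acting on the moduli of holomorphic disks'' has no support in the paper: for positive genus the only input is the absence of bubbling (so the argument works for loops of any length), the curves involved are sections of sphere bundles rather than disks, and no contraction rate of any kind appears. These are the places where your outline asserts structure that neither the paper nor your argument actually provides.
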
 
\end{remark} 

\section{Proof of Theorem \ref{thm:contractibleloops}}
We first verify the case of $M=S ^{2} $. 
The following argument works
the same way for any $0 < c \leq \hbar$, 
we shall just do it with
$c=\hbar$. 
To recall, our conventions for the Hamiltonian flow and compatible almost complex
structures are:
\begin{align} \label{eq:conventions}
\omega (X _{H}, \cdot) = - dH (\cdot) \\
\omega (v, J v) > 0, \text{ for $v \neq 0$ }.
\end{align}
For every $l \in \Omega ^{\hbar, cont} \Ham(S ^{2} )   $ we get a  Hamiltonian $S ^{2} $  fibration $X _{l} $ over
$\mathbb{CP}^{1} $, by using $l$ as a clutching map: 
\begin{equation*}
X _{l} = S ^{2} \times D ^{2} _{-} \sqcup  S ^{2} \times D ^{2} _{+}/
\sim.
\end{equation*}

Recall that a \emph{coupling form}, see
\cite{citeGuilleminLermanEtAlSymplecticfibrationsandmultiplicitydiagrams}
for a Hamiltonian fibration $M
\to  P \overset {\pi}\hookrightarrow X$ is a closed 2-form $\widetilde{\alpha} $ on the total space, which
restricts to the symplectic form on the fibers, modeled by $(M,
\omega)$, and which satisfies:
\begin{equation*}
\pi _{*}  \widetilde{\alpha}  =0,
\end{equation*}
where $\pi _*$  is the integration over the fiber map.
By
\cite{citeGuilleminLermanEtAlSymplecticfibrationsandmultiplicitydiagrams}
such  forms $\widetilde{\alpha} $ always exist for a Hamiltonian fibration, and are
uniquely determined by the associated Hamiltonian connections, defined 
 by declaring the horizontal
subspaces to be the $\widetilde{\alpha} $-orthogonal subspaces to the
vertical tangent spaces.
We have the coupling form $\widetilde{\Omega} _{l,-}  $ on $S ^{2}
\times D ^{2} _{-}   $ defined by 
\begin{equation*}
\widetilde{\Omega}_{l,-}  = \omega -d (\eta (r) \cdot H ^{l}d \theta),
\end{equation*}
where $0 \leq r \leq 1,  0 \leq \theta \leq 1 $, (which are our modified
polar coordinates) $H ^ {l} $ is the normalized generating function
for $l$, and
$\eta: [0,1] \to [0,1]$ is a smooth  function
satisfying:
$$0 \leq \eta' (r),$$ and 
\begin{equation} \label{eq.eta} \eta (r) = \begin{cases} r^2 & \text{if }
0 \leq r \leq 1- 2\kappa \\ 1 & \text{if }  1 -\kappa \leq r \leq 1,
\end{cases}
\end{equation}
for a small $\kappa >0$. 
Under the gluing relation $\sim$, $\widetilde{\Omega} _{l, -}  $ corresponds to the
form $\omega$ near the boundary of $S ^{2} \times D ^{2}_{+}   $, so we may extend trivially over
$D ^{2} _{+}  $ to get a coupling form $\widetilde{\Omega} _{l}  $ on $X _{l} $.
The coupling form $\widetilde{\Omega}_{l}  $ determines a Hamiltonian connection
$\mathcal{A}_{l} $ as described above. 
This in turn determines an almost complex structure $J _{l}$, by first fixing a family $\{j _{l}(z)\}$, $z
\in \mathbb{CP}^{1} $ smooth in $z$: $j _{l}
(z)$ is an almost complex structure on the fiber $\pi _{l} ^{-1} (z)
$, where $$\pi _{l}: X _{l} \to \mathbb{CP}^{1}   $$ is the
projection, with each $j _{l} (z) $ compatible with the symplectic
form on the fiber, and then defining $J _{l} $  to coincide with $\{j
_{l} (z)\}$
on the vertical tangent bundle, to preserve the horizontal distribution of
$\mathcal{A} _{l} $  and to have a holomorphic projection map to
$(\mathbb{CP}^{1},  j) $, where $j$ is the almost complex
structure which preserves the standard orientation on
$\mathbb{CP}^{1} $.
As each $l$ is contractible and so $X _{l} $ trivializable, we may consider the moduli spaces
$$\overline{ \mathcal{M}}(J _{l }), $$ consisting of  stable
 $J _{l } $ holomorphic  sections $\sigma$ of $X _{l}$, in the class of
the constant section $[const]$, and in particular satisfying:
\begin{equation*}
   \langle [\widetilde{\Omega}_{l}]  , [\sigma]  \rangle = 0.
\end{equation*}
By a \emph{stable holomorphic section} we mean a stable $J _{l} $-holomorphic
map $\sigma$ into $ X _{l} $, in the classical sense,
\cite{citeMcDuffSalamon$J$--holomorphiccurvesandsymplectictopology} with domain an umarked nodal Riemann sphere, one
of those components is called \emph{principal}. The restriction $\sigma
_{princ} $ of $\sigma$, to the principal component is a $J _{l} $
holomorphic section, i.e. we have a commutative diagram:
\begin{equation*}
\xymatrix {& X _{l} \ar [d] ^{\pi _{l} }  \\ 
\mathbb{CP}^{1} \ar [ur] ^{\sigma _{princ} } \ar [r] ^{\id}  &   \mathbb{CP}.}
\end{equation*}
All the other components of $\sigma$ are vertical, that is they are
$J_l$-holomorphic maps into the fibers of $X _{l} $.
\begin{lemma}
The moduli space $\overline{ \mathcal{M}} (J _{l } )  $  has no nodal
curves as elements. 
\end{lemma}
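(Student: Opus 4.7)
The plan is to argue by contradiction. Suppose $\sigma = \sigma_{princ} \cup \bigcup_i B_i$ is a nodal element of $\overline{\mathcal{M}}(J_l)$ with at least one vertical bubble. Each bubble $B_i$ is a $j_l$-holomorphic sphere inside a single fiber $\cong S^2$, hence represents a positive multiple $k_i \geq 1$ of the fiber class $[F]$, contributing $k_i \hbar$ to the $\widetilde{\Omega}_l$-area. Since the total class is $[const]$ and $\langle[\widetilde{\Omega}_l], [const]\rangle = 0$, the principal component must satisfy
\[
\langle[\widetilde{\Omega}_l], [\sigma_{princ}]\rangle \;=\; -k\hbar, \qquad k := \sum_i k_i \geq 1,
\]
in particular $\langle[\widetilde{\Omega}_l], [\sigma_{princ}]\rangle \leq -\hbar$.

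The core of the argument is then a matching Floer-type energy lower bound on $\langle[\widetilde{\Omega}_l], [\sigma_{princ}]\rangle$. Writing $\sigma_{princ}|_{D^2_-} = (s_-, \id)$, I would first identify the horizontal lift of $\partial_\theta$ for the Hamiltonian connection $\mathcal{A}_l$ as $\partial_\theta + \eta(r) X_{H^l}$, so that the $J_l$-holomorphic equation on $D^2_-$ becomes the rescaled Floer equation
\[
\partial_\theta s_- - j_l \partial_r s_- \;=\; \eta(r) X_{H^l}(s_-, \theta),
\]
with $s_+$ on $D^2_+$ merely $j_l$-holomorphic. Substituting into $\widetilde{\Omega}_l = \omega - d(\eta H^l d\theta)$ and using the standard identity $\omega(Y, X_{H^l}) = dH^l(Y)$, the gradient cross-terms should cancel and yield the pointwise formula
\[
\sigma_{princ}^*\widetilde{\Omega}_l \;=\; \bigl(|\partial_r s_-|^2_{j_l} - \eta'(r) H^l(s_-,\theta)\bigr)\, dr \wedge d\theta
\]
on $D^2_-$, together with a manifestly nonnegative contribution on $D^2_+$. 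Using $\eta'(r) \geq 0$, $\int_0^1 \eta'(r)\, dr = 1$, and $H^l(x,\theta) \leq \max_x H^l(\cdot,\theta)$, integration then yields
\[
\langle[\widetilde{\Omega}_l], [\sigma_{princ}]\rangle \;\geq\; -L^+(l) \;\geq\; -\hbar.
\]

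Combined with the upper bound from the first paragraph, this forces $k=1$ and equality $\langle[\widetilde{\Omega}_l], [\sigma_{princ}]\rangle = -\hbar = -L^+(l)$, and the remaining step is a rigidity analysis. Equality in the energy estimate requires $\partial_r s_- \equiv 0$, $s_+$ constant, and $s_-(r,\theta)$ realizing $\max_x H^l(\cdot,\theta)$ wherever $\eta'(r) > 0$. The Floer equation then reduces to $\partial_\theta s_- = \eta(r) X_{H^l}(s_-, \theta)$; since the left side is $r$-independent while $\eta$ varies, $X_{H^l}(s_-(\theta), \theta) \equiv 0$, so $s_-$ is a constant loop at a fixed point $x_0$ of the Hamiltonian flow of $l$. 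The clutching condition $s_+(e^{i\theta}) = l_\theta(x_0) = x_0$ and the maximum principle then force $s_+$ to be constant as well, making $\sigma_{princ}$ the constant section, of class $[const]$ rather than $[const] - [F]$, the desired contradiction.

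The main obstacle I expect is this equality-case rigidity, which is what allows the argument to close in the critical case $c = \hbar$ permitted by Theorem \ref{thm:contractibleloops}; away from the critical value the two bounds clash immediately and no rigidity discussion is needed. A secondary technicality is ensuring the family $\{j_l(z)\}$ and the cutoff $\eta$ are chosen compatibly across the equator so that the transition between the Floer equation on $D^2_-$ and ordinary $j_l$-holomorphicity on $D^2_+$ is smooth, though this is routine in the Seidel-style Hamiltonian-fibration framework.
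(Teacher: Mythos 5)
Your argument is correct and is essentially the paper's: a vertical bubble forces $\langle[\widetilde{\Omega}_l],[\sigma_{princ}]\rangle \leq -\hbar$, which is then contradicted by the energy lower bound $\langle[\widetilde{\Omega}_l],[\sigma_{princ}]\rangle \geq -L^+(l)$; the paper gets that bound by citing the classical energy inequality together with the directly computed identity $\area^+(\widetilde{\Omega}_l)=L^+(l)$, while you unpack the same computation as an explicit Floer-type energy identity. Your entire equality-case rigidity paragraph is unnecessary, however: $\Omega^{\hbar,cont}\ham$ is defined by the \emph{strict} inequality $L^+(l)<\hbar$, so the two bounds clash immediately and the lemma closes without any discussion of the critical case.
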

\begin{proof}
Suppose otherwise, then there is a stable $J _{l } $-holomorphic section $\sigma$ of $X _{l} $, with
total homology class $[const]$, and consequently having a principal component
$\sigma _{princ} $ which is a smooth $J _{l} $-holomorphic section of $X _{l} $
with:
\begin{equation*}
 \langle [\widetilde{\Omega}_{l}  ], [\sigma _{princ} ]  \rangle \leq -\hbar
\end{equation*}
as $\hbar$ is the minimal energy of a non-constant holomorphic sphere in $S
^{2} $, i.e. $\area (S ^{2},\omega )$. However in this case the classical energy inequality for holomorphic
curves gives:
\begin{equation*}
\hbar \leq \area ^{+}  (\widetilde{\Omega}_{l}),
\end{equation*}
where $\area ^{+} $  is the functional on the space of coupling forms
\begin{equation*}
   \left. \area ^{+}  (\widetilde{\Omega}) = \inf _{\alpha}  \left \{\int
         _{\mathbb{CP}^{1}    } \alpha \, \right| \,
     \widetilde{\Omega}  + \pi ^{*} (\alpha) \, \text { is
 symplectic} \right \},
\end{equation*}
for $\alpha$ a $2$-form on $\mathbb{CP}^{1} $, with positive integral.
On the other hand by direct calculation we have 
$$ \area ^{+}  (\widetilde{\Omega}_{l}
) = L ^{+}   (l) < \hbar.$$ This gives a contradiction.
\end{proof}
%

By automatic transversality see
\cite[Appendix C]{citeMcDuffSalamon$J$--holomorphiccurvesandsymplectictopology}
$\overline{ \mathcal{M}} (J _{l } )  $  is
regular i.e. the associated real linear Cauchy-Riemann operator is transverse
for all $\sigma \in \overline{ \mathcal{M}} (J _{l } ) $. As these
are embedded we may use 
positivity of
intersections,  see \cite [Section
  2.6]{citeMcDuffSalamon$J$--holomorphiccurvesandsymplectictopology},
to infer that $\overline{
\mathcal{M}} (J _{l })$  determines a smooth  foliation of $X _{l} $ by
$[const]$ class holomorphic sections.  In particular
\begin{equation*}
   ev: \overline{ \mathcal{M}} (J _{l } )  \to S ^{2},
\end{equation*}
obtained by evaluating a section at $0 \in \mathbb{CP}^{1} $, is 
a diffeomorphism,  and determines   a canonical
smooth trivialization of $ X _{l} $. Let $\Theta _{l } $ denote the
corresponding horizontal distribution.

Consequently for an appropriately smooth family:
\begin{equation*}
   f: S ^{k}  \to \Omega ^{\hbar, cont} \ham,
\end{equation*}
we get  natural (up to  choices of almost complex structures)
smooth trivialization of the bundle 
\begin{equation*}
P _{f}  \to S ^{k},
\end{equation*}
with fiber over $s \in S ^{k} $ being $X _{s} \equiv X _{f (s)} $. 
This is a 
trivialization of a bundle with structure group $\Omega ^{2}  \Diff (S
^{2}) $. What is important for what follows  is that this structure
group is a subgroup of the group of smooth
bundle maps of $S ^{2} \times \mathbb{CP}^{1} \to \mathbb{CP}^{1}    $.

Let 
\begin{equation*}
tr: P _{f}  \to (S ^{2} \times \mathbb{CP}^{1}  ) \times S ^{k},  
\end{equation*}
denote this trivialization. Set $\{\mathcal{F}_{s}= (tr _{s})
^{*}   
\omega \}$, where $tr_{s} $ denotes the restriction of $tr$ to the
fiber $
{X _{s} }$. By the above $tr _{s} $ is a smooth bundle map and hence
each $\mathcal{F}_{s} $ is a \emph{connection type} closed form, which
just means the restriction to each fiber of $\pi_s: X _{s} \to \mathbb{CP}^{1}
$, is symplectic. This in turn 
 means that the smooth bundle connection determined
 by declaring horizontal spaces to be $\mathcal{F}_{s} $-orthogonal
 spaces to the vertical tangent spaces, is Hamiltonian (with respect
 to the family of fiber symplectic forms determined by the
 restrictions.)

Also by construction each
$\mathcal{F} _{s} $ vanishes on the horizontal distribution $\Theta _{f
(s)} $, and so 
\begin{align*}
& \tau _{s}   = \mathcal{F} _{s} + \rho \pi ^{*}_{s}  \tau, 
\end{align*}
is a  symplectic form on $X _{s} $ for any small $\rho>0$, and where $\tau$ a fixed, area
one symplectic form on
$\mathbb{CP} ^{1} $. Likewise
\begin{align*}
   & \alpha _{s}= \widetilde{\Omega}_{f (s)} + \area ^{+}
   (\widetilde{\Omega}_{f(s)}  )    \pi _{s} 
^{*} \tau  + 
\rho \pi ^{*}_{s}  \tau,
\end{align*}
is symplectic,
and $\alpha _{s} $ is positive on the horizontal
distribution $\Theta _{f (s)} $. 

Let $\alpha _{s,t}  $, $t \in [0,1]$  denote
the convex linear combination 
\begin{equation*} \label{eq:convex}
   \alpha _{s,t} = (t) \tau _{s}   + (1-t) \alpha
_{s}.
\end{equation*}
Note that $\alpha _{s,t
  } $ is nondegenerate and hence symplectic for every $s,t$, as this is a convex linear
  combination of symplectic forms tamed by the same almost complex
  structure $J(
  \mathcal{A} _{s} )
 $, where 
 $\mathcal{A} _{s} $ is the connection determined by
 $\widetilde{\Omega}_{f (s)}$ as before,
and so $\alpha _{s,t
  }$ is also tamed by this complex structure. Moreover for a fixed
  fiber $S ^{2} _{z} $, $\alpha _{s,t
  } $ is a convex linear combination of cohomologous symplectic forms
  $\mathcal{F}_{s}| _{S ^{2}_{z}  }  $ and
  $\widetilde{\Omega}_{f (s)} | _{S ^{2} _z }  $. By construction
  $(S ^{2}_0, \widetilde{\Omega}_{f (s)} | _{S ^{2} _0 } ) $ is naturally
  symplectomorphic to $(S ^{2}, \omega) $, consequently applying
  Moser's argument  we get that
   $\alpha _{s,t
  }|_{ S ^{2}_{0}}  $ is naturally symplectomorphic to $(S ^{2},
  \omega) $, for each $t$. We shall use this further on in the
  ``radial trivialization''construction.
\begin{notation}
Denote by $\widetilde{\alpha} _{s,t}  $ the coupling form determined
by $\alpha _{s,t} $.
\end{notation}
\begin{lemma} \label{lemma:bounded} $\area ^{+} (\widetilde{\alpha}  _{s,t})$ is
bounded from above by a function $b (s,t)$  non-increasing with $t$,
with $b (s, 1)=\rho$, and with $b (s, 0)= L ^{+} (f (s))+ \rho $.
\end{lemma}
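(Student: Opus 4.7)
The plan is to produce a $2$-form $\alpha'$ on $\mathbb{CP}^1$ with small positive integral such that $\widetilde{\alpha}_{s,t} + \pi_s^* \alpha'$ is symplectic; then $\area^+(\widetilde{\alpha}_{s,t}) \leq \int_{\mathbb{CP}^1} \alpha'$ follows directly from the definition of $\area^+$. The natural candidate is $\alpha' = \delta_{s,t}$, the unique $2$-form on $\mathbb{CP}^1$ satisfying $\alpha_{s,t} - \widetilde{\alpha}_{s,t} = \pi_s^* \delta_{s,t}$. Such a $\delta_{s,t}$ exists because $\widetilde{\alpha}_{s,t}$, being the coupling form associated with $\alpha_{s,t}$, agrees with $\alpha_{s,t}$ on vertical pairs and on mixed horizontal-vertical pairs for the connection induced by $\alpha_{s,t}$; the difference is therefore supported on horizontal pairs and descends to $\mathbb{CP}^1$ via horizontal lifts. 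With this choice $\widetilde{\alpha}_{s,t} + \pi_s^* \delta_{s,t} = \alpha_{s,t}$ is tautological, and $\alpha_{s,t}$ was already shown to be symplectic above, so $\area^+(\widetilde{\alpha}_{s,t}) \leq \int_{\mathbb{CP}^1} \delta_{s,t}$.

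It remains to compute $\int_{\mathbb{CP}^1} \delta_{s,t}$, which I would do by a short cohomology calculation. The key claim is that both $\mathcal{F}_s = tr_s^* \omega$ and $\widetilde{\Omega}_{f(s)}$ represent the coupling cohomology class $u \in H^2(X_s)$ of the Hamiltonian fibration $X_s$: for $\widetilde{\Omega}_{f(s)}$ this is immediate; for $\mathcal{F}_s$ it follows from uniqueness of the coupling class (characterized by restricting to $[\omega]$ on fibers and having fiber-pushforward of the top power equal to zero), since $\mathcal{F}_s$ is closed, restricts to $\omega$ on each fiber, and $\mathcal{F}_s \wedge \mathcal{F}_s = tr_s^*(\omega \wedge \omega)$ vanishes identically because $\omega$ is already a top form on the two-dimensional fiber $S^2$. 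Since $[\widetilde{\alpha}_{s,t}] = u$ as well, taking classes yields
\begin{equation*}
[\pi_s^* \delta_{s,t}] = [\alpha_{s,t}] - u = t[\mathcal{F}_s] + (1-t)[\widetilde{\Omega}_{f(s)}] + \bigl(\rho + (1-t) L^+(f(s))\bigr)[\pi_s^* \tau] - u = \bigl(\rho + (1-t) L^+(f(s))\bigr)[\pi_s^* \tau].
\end{equation*}
Since $\tau$ has unit area on $\mathbb{CP}^1$ and $\pi_s^*$ is injective on $H^2$ of the base, this yields $\int_{\mathbb{CP}^1} \delta_{s,t} = \rho + (1-t) L^+(f(s))$.

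Setting $b(s,t) := \rho + (1-t) L^+(f(s))$ one immediately verifies the three required properties: $b(s,1) = \rho$, $b(s,0) = L^+(f(s)) + \rho$, and non-increase in $t$ from $L^+(f(s)) \geq 0$. The only delicate step is the identification $[\mathcal{F}_s] = u$, which relies on the cohomological characterization of the coupling class rather than any pointwise property of $\mathcal{F}_s$; pointwise, $\mathcal{F}_s$ need not be the coupling form for the connection $\mathcal{A}_{s,t}$ associated with $\alpha_{s,t}$, but its cohomology class is nevertheless the unique coupling class of $X_s$, which is all that is needed for the bound.
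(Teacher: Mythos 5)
Your proof is correct and follows essentially the same route as the paper: both bound $\area^+(\widetilde{\alpha}_{s,t})$ by exhibiting $\alpha_{s,t}=\widetilde{\alpha}_{s,t}+\pi_s^*\tau'$ with $\tau'$ a base form and then evaluating $\int_{\mathbb{CP}^1}\tau'$. The only difference is presentational: the paper packages the ``direct calculation'' as the normalized volume $\Vol(X_s,\alpha_{s,t})/\Vol(S^2,\omega)$, whereas you compute the same number cohomologically, supplying the useful detail that $[\mathcal{F}_s]$ equals the coupling class because $\mathcal{F}_s\wedge\mathcal{F}_s=0$.
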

\begin{proof}
Define   
\begin{equation} \label{eq:volumelength}
\area   (\alpha) := \Vol (X _{s}, \alpha)/ \Vol
(S ^{2} ,\omega).
\end{equation}
Note that
   $$
   \area ^{+} (\widetilde{\alpha} _{s,t})  \leq  \area (\alpha
   _{s,t}),$$
which follows by the fact that $$\alpha_{s,t} = \widetilde{\alpha}  _{s,t} + \pi _{s}
^{*} \tau',  $$  for an area form $\tau'$ by the construction.
Set $b (s,t)= \area (\alpha _{s,t} )$, then by direct calculation
$$b (s,0)= L ^{+} (f (s)) + \rho,  $$ and $$b (s,1) = \rho.$$
\end{proof}

Each $\widetilde{\alpha} _{s,t} $ determines a loop $f _{s,t} $ in
$\ham$, first by using the natural identification of $(S ^{2}_0,
\widetilde{\alpha}_{s,t}| _{S ^{2}_0 })    $ with $(S ^{2}, \omega)
   $ as described above, then
identifying the fiber over $0 \in \mathbb{CP}^{1} $ with the fiber over $
\infty \in \mathbb{CP}^{1} $ by $\widetilde{\alpha} _{s,t}$-parallel transport map over the
$\theta=0$ ray from $0$ to $\infty$ ($0 = 0 \in D ^{2}_{-}$, $\infty = 0 \in D
^{2} _{+}$ in our coordinates from before), and then
for every other $\theta$ ray from $0$ to $\infty$ getting an element $f _{s,t}
(\theta) \in \ham$ by $\widetilde{\alpha} _{s,t} $ parallel transport
over this ray. Clearly  $f _{s,0} = f (s) $, and $f _{s,1}=\id $ and
for each $t$, $f (s,t)$ is a loop of Hamiltonian symplectomorphisms of
$(S ^{2}, \omega) $ (by the identification above.)
On the other hand by the proof
of 
  \cite [Lemma
  3.3A]{citePolterovichSymplecticaspectsofthefirsteigenvalue.}  (cf. proof
  of
  \cite [Lemma 2.2]{citeMcDuffGeometricvariantsoftheHofernorm}) we have:
\begin{equation} \label{eq:Polterov}
   L ^{+}  (f _{s,t} ) \leq  \area ^{+}  (\widetilde{ \alpha} _{s,t} ).
\end{equation}

Consequently, by the Lemma \ref{lemma:bounded}  we get that
$L ^{+} (f _{s,t}) $ as a function of $t$ is bounded from
above by the
non-increasing continuous function $b (s,t) $ with maximum $L
^{+} (f(s))+\rho$, and with $b (s,1) =\rho$. Taking $\rho$ small
enough so that $L ^{+} (f _{s} ) + \rho < \hbar $ for each $s$, we obtain a null-homotopy  of $f$ in $\Omega
^{\hbar, cont} \ham $.

So we get that $\Omega
^{\hbar, cont}\ham $
is weakly contractible. 
    Next note that $\Omega
^{\hbar} \ham$ is an open subset of $\Omega \ham$, which by Milnor
\cite{citeMilnoruniversalbundles}, has the
homotopy type of a CW complex, as it is the loop space of a Frechet
manifold, hence of an absolute neighborhood retract, and so $\Omega
^{\hbar} \ham$ has the
homotopy type of a CW complex. So it follows that $\Omega ^{\hbar,
cont}
\ham $ is contractible.



For a $\Sigma$ as in the statement of the theorem, we may proceed with
exactly the same argument upon noting that now there is no bubbling at
all, so that we don't need to restrict to short loops.
\qed
\section {A virtually perfect Morse theory for the Hofer length
functional and positive Morse index geodesics in $\ham, \hams$} \label{section:Ustilovsky}

Let us review our notion of a local unstable manifold in the setting
of loops.  
\begin{definition} \label{def.unstable} Let $\gamma \in \Omega 
\Ham(M, \omega)$ be
an index $k$, $L ^{+} $ Ustilovsky geodesic and let $B ^{k} $ denote the standard
 $k$-ball in $ \mathbb{R} ^{k} $, centered at the origin 0. Let $
\Omega \Ham(M, \omega) _{E}  $ denote the  $ E
$ sub-level set of the loop space,  with respect to $L ^{+} $, with $$0 < E  < L ^{+}
   (\gamma),$$ where by $E$ sub-level set we mean the set of elements
   $\gamma \in \Omega \Ham(M, \omega) _{E} $  satisfying $L ^{+}
   (\gamma) \leq E $. \emph {\textbf{A local unstable manifold}} at $\gamma $ is a
   pair $ (f _{\gamma}, E ) $, with  $$f _{\gamma}: B ^{k} \to \Omega
   \Ham(M, \omega),$$ s.t. $$f _{\gamma} (0)  = \gamma,$$ $f _{\gamma} ^{*} L ^{+} $ is
   a function Morse at the unique maximum $0 \in B ^{k}$, and s.t. $$f _{\gamma}
   (\partial B ^{k} ) \subset \Omega  \Ham(M, \omega) _{E} .$$ \end{definition}
It is explained in
\cite{citeSavelyevMorsetheoryfortheHoferlengthfunctional} that local
unstable manifolds always exist.

An interesting consequence of the ``index theorem'' is that for
closed Ustilovsky geodesics the Morse index must be even, and this gives a bit
of intuition into the following theorem which can be interpreted as
saying  that 
the Hofer length functional on the loop space of the Hamiltonian group
of a surface is a kind of perfect Morse-Bott function. We say surface
for while the index stays even in general the following theorem does
not hold without an additional technical assumption on the geodesic.
(We need the kernel of certain Cauchy-Riemann operator associated to
the geodesic to be trivial.)

 The
  following is a version of the author's
  \cite [Theorem 1.9]{citeSavelyevMorsetheoryfortheHoferlengthfunctional}, in
  the case of surfaces, where it becomes a stronger result,
  due to the additional input of automatic transversality.
 \begin{theorem} \label{thm:localunstable}
Let $M$ be any Riemann surface. Let $\gamma \in \Omega
\Ham(M, \omega)$ be a closed smooth positive Morse index $k$,
$L ^{+} $ Ustilovsky geodesic.  
 If $ (f_\gamma, E ) $ is a local unstable manifold for $\gamma $ then
$$0 \neq [f _{\gamma}] \in  \pi _{k} ( \Omega \Ham(M, \omega),
\Omega   \Ham(M, \omega) _{E} ).$$
If the Morse index of $\gamma$ is 0 and $\gamma$ is quasi-integrable, then $\gamma$
is $L ^{+} $ length minimizing. This second part of the theorem holds
for a general symplectic manifold (assuming virtual techniques.)
\end{theorem}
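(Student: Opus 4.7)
The plan is to adapt the author's virtual Morse-theoretic argument for $L^+$ from \cite{citeSavelyevMorsetheoryfortheHoferlengthfunctional}, but to replace the virtual moduli spaces with genuine smooth ones, since $M$ is a surface. To a local unstable manifold $(f_\gamma, E)$ with $f_\gamma\colon B^k\to\Omega\Ham(M,\omega)$ I would associate the family of Hamiltonian $M$-fibrations $X_{f_\gamma(s)}\to \mathbb{CP}^1$ constructed as in Section 2, assembled into a single total space $P\to B^k\times\mathbb{CP}^1$ with coupling forms $\widetilde{\Omega}_s$ and compatible almost complex structures $J_s$. The Ustilovsky property at $\gamma=f_\gamma(0)$ produces a canonical $J_0$-holomorphic section $\sigma_{\max}$ of $X_\gamma$, namely the constant section through $x_{\max}$ in a trivialization where the horizontal distribution agrees with the Hamiltonian connection; its fiberwise linearization is $\gamma_*$. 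I would then study the parametric moduli space
\[
\mathcal{M}=\{(s,\sigma)\mid s\in B^k,\ \sigma\text{ is a }J_s\text{-holomorphic section of }X_{f_\gamma(s)}\text{ in the class }[\sigma_{\max}]\}.
\]

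Three ingredients make the invariant extracted from $\mathcal{M}$ effective. First, an analogue of Lemma \ref{lemma:bounded} combined with the class constraint precludes bubbling off the principal component, so elements of $\mathcal{M}$ are honest smooth sections. Second, automatic transversality for sections of $S^2$-bundles from \cite[Appendix C]{citeMcDuffSalamon$J$--holomorphiccurvesandsymplectictopology} makes $\mathcal{M}$ a smooth manifold of the expected dimension, which by Theorem \ref{thm:indexUst} and the Maslov number of $\gamma_*$ is exactly $k$. Third, a local model at $\sigma_{\max}$ identifies the parametric count at $s=0$ with $\pm 1$, yielding a nonzero class in $H_k(B^k,\partial B^k)=\mathbb{Z}$. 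If $[f_\gamma]$ were trivial in the relative homotopy group, a null-homotopy rel boundary would cobord $\mathcal{M}$ to a moduli space sitting entirely over the sublevel set, where the energy bound forces emptiness, contradicting the nonvanishing of the count. The main obstacle I anticipate is aligning the kernel of the linearized Cauchy-Riemann operator at $\sigma_{\max}$ with the Morse-theoretic unstable directions of $L^+$ at $\gamma$; this reduces to a splitting along the Hessian eigenspaces of $H^\gamma_t$ at $x_{\max}$, which is where the Ustilovsky hypothesis is essential and where the bookkeeping is most delicate.

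For the index-$0$ quasi-integrable case, the Darboux chart realizes the flow near $x_{\max}$ as unitary, and this integrable local model lets one extend the section through $x_{\max}$ to a global $J_\gamma$-holomorphic section of $X_\gamma\to\mathbb{CP}^1$ of vanishing $\widetilde{\Omega}_\gamma$-area. Together with the classical energy inequality this forces $\area^+(\widetilde{\Omega}_\gamma)=L^+(\gamma)$ to be minimal among coupling areas of Hamiltonian fibrations obtained by clutching with any loop in the free homotopy class of $\gamma$, and then the inequality \eqref{eq:Polterov} transfers minimality to $L^+$ itself. The only input specific to surfaces was automatic transversality used above; here only the existence of the distinguished holomorphic section through $x_{\max}$ is needed, which for a general symplectic manifold is supplied by the virtual apparatus, explaining the wider validity of this second statement.
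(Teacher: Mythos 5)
Your overall strategy coincides with the paper's: assemble the clutching fibrations over $B^k$ into a single family, single out the flat section $\sigma_{\max}$ through $x_{\max}$ in a class $A$, localize the parametric moduli space at $(\sigma_{\max},0)$ by energy positivity, extract a $\pm 1$ count, and conclude by a cobordism argument that $f_\gamma$ cannot be homotoped rel boundary into the sublevel set. However, your second ``ingredient'' contains a concrete misstep. By Theorem \ref{thm:indexUst} the Morse index is $k=-\mathrm{Maslov}(\gamma_*)-2$, and $\mathrm{Maslov}(\gamma_*)$ equals twice the vertical Chern number of $\sigma_{\max}^*T^{vert}X_0$; hence the fiberwise Cauchy--Riemann operator has index $2+\mathrm{Maslov}(\gamma_*)=-k$, and the parametric moduli space has expected dimension $-k+k=0$, not $k$: the invariant is a signed point count, not a $k$-dimensional cycle pushed to $H_k(B^k,\partial B^k)$. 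More seriously, since $c_1^{vert}=-(k+2)/2\leq -2$, automatic transversality in the surjectivity form you cite does \emph{not} apply; the restricted operator $D^{rest}_{\sigma_{\max}}$ has a $k$-dimensional cokernel. What is ``automatic'' here is the opposite statement, namely injectivity (triviality of the kernel) of a Cauchy--Riemann operator on a line bundle over $\mathbb{CP}^1$ of negative Chern number, and this is precisely what the paper exploits. Regularity of the full parametric problem then requires the $B^k$-derivative to surject onto the $k$-dimensional cokernel, which is where the condition that $f_\gamma^*L^+$ is Morse at $0$ enters; so the delicate matching you anticipate is between the \emph{cokernel} and the unstable directions, not the kernel, which vanishes.

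Two further points. Your claim that a local model at $\sigma_{\max}$ gives a count of $\pm1$ presupposes that $\sigma_{\max}$ is the \emph{only} element of the moduli space over $b=0$; the paper obtains this from a second energy-positivity argument (Seidel's trick) after first forcing all elements to lie over $b=0$ because a class-$A$ section of $X_b$ bounds $L^+(f_\gamma(b))$ below by $L^+(\gamma)$ and $0$ is the strict maximum of $f_\gamma^*L^+$. For the index-$0$ case your sketch is not yet an argument: producing one $\widetilde{\Omega}_\gamma$-flat holomorphic section of $X_\gamma$ says nothing about a competing loop $\gamma'$ homotopic to $\gamma$. One needs a nonvanishing (virtual) count of class-$A$ sections that persists under deformation of the clutching loop, so that the energy inequality together with $\langle[\widetilde{\Omega}_{\gamma'}],A\rangle=-L^+(\gamma)$ and \eqref{eq:Polterov} yields $L^+(\gamma')\geq L^+(\gamma)$; this is the McDuff--Slimowitz mechanism the paper points to, and it is also where quasi-integrability is used to control the local structure of the moduli space at $\sigma_{\max}$.
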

The index 0 case of the above is just a very minor variation of a
foundational result in McDuff-Slimowitz
\cite{citeMcDuffSlimowitzHofer--ZehndercapacityandlengthminimizingHamiltonianpaths},
and must be well known to experts. A relatively elementary proof can be
given 
for example by modelling the proof of Theorem 1.9 in
 \cite{citeSavelyevMorsetheoryfortheHoferlengthfunctional} in index 0
 case. In what follows we outline the proof of the positive index
 case.
 \begin{proof} (Outline) 
The main difference here with the proof of  
\cite[Theorem
1.9]{citeSavelyevMorsetheoryfortheHoferlengthfunctional}, is that
we deal with closed loops, and that we can use in the context of
surfaces a stronger form of the ``automatic transversality''
\cite[Theorem
1.20]{citeSavelyevMorsetheoryfortheHoferlengthfunctional}. (It
becomes more automatic.)
The setup with closed loops here has already been done 
in the author's
\cite{citeSavelyevVirtualMorsetheoryon$Omega$Ham$(Momega)$.}, but without
the generality of Ustilovsky geodesics, and without the rather helpful ``index
theorem'' \ref{thm:indexUst}.
   
Given our local unstable manifold $(f _{\gamma}, E) $, we obtain a
Hamiltonian fibration $$M \hookrightarrow
P _{f _{\gamma} } \to B ^{k} \times S ^{2},  
$$
by doing the standard clutching construction $$M \times D ^{2} \sqcup
_{\gamma _{b} } M \times D ^{2},  $$  for a loop of diffeomorphisms of
$M$, $\gamma _{b} = f _{\gamma}(b)  $, as in the proof of Theorem
\ref{thm:contractibleloops},
parametrically as ${b}$   varies in $B ^{k} $.
For each $b$ we obtain a natural coupling form $ \widetilde{\Omega}_{b}
$ on $X _{b} \equiv P _{f _{\gamma} } |
_{\{b\} \times S ^{2} }$, as in the proof of Theorem
\ref{thm:contractibleloops}. From construction it is immediate that the family
$\{\widetilde{\Omega}_{b}\}$ extends to a closed form
$\widetilde{\Omega}  $
on $P _{f _{\gamma} } $ and that this is a coupling form. Let $\{J _{b} \}$
denote the family of almost complex structures induced by the family
$\{\widetilde{\Omega}_{b}\}$ as in the proof of Theorem
\ref{thm:contractibleloops}. The fixed point $x _{\max} $ of $\gamma$
gives a tautological flat and hence holomorphic section $\sigma _{\max} $ of $X _{0}  \to
S ^{2} $, let us call by $A$ its homology class in $P _{f _{\gamma} }
$.


Let $\overline{\mathcal{M}}(P _{f _{\gamma}}, A  ) $ denote the compactified
moduli space of pairs $(u,b)$, for  $u$ a  $J _{b} $-holomorphic
section of $X _{b}  $ in class $A$. Then by
the same argument that is used in
\cite[Proposition
1.19]{citeSavelyevMorsetheoryfortheHoferlengthfunctional},
we get that $\overline{\mathcal{M}}(P _{f _{\gamma}}, A  ) $ consists of a
single point $(\sigma _{\max},0 )$. Let us briefly give an idea for
this.
 For a given parameter $b$,  by a 
kind of classical energy positivity, a  $J _{b} $ holomorphic section
of $X _{b} $ in class $A$ will give a lower bound for the positive Hofer length of the loop
$f _{\gamma}(b) $ and this lower bound is exactly $L ^{+}(\gamma) $.
So all the  elements of our moduli space must appear in $X _{0}   $.
Then a neat but simple trick, also based on energy positivity and originally due
to Paul Seidel, allows one to conclude that $\sigma _{\max} $ is the
only possibility for such an element.

Let \begin{equation*} D  ^{rest}_{\sigma_{\max}}: \Omega ^{0} ( \sigma _{\max} ^{*}
      T ^{vert} X _{0})     \to \Omega ^{0,1} (\sigma
   _{\max} ^{*} T ^{vert} X _{0}  ), 
\end{equation*}
be the associated real linear Cauchy-Riemann operator. Then we claim
that this
operator has trivial kernel. This works as follows.  Twice of the vertical
Chern number of the normal bundle $\sigma _{\max} ^{*}
T ^{vert} X _{0}$ is less than $-2$, as by construction 
this is the winding (Maslov) number of the linearization of
$\gamma$ at $x _{\max} $, and by assumption that the Morse index
of $\gamma$ is positive and by the ``index'' theorem
\ref{thm:indexUst}, this winding number is less than $ -2$. Given this
by the (proof of) the first part of \cite[Theorem
1.20]{citeSavelyevMorsetheoryfortheHoferlengthfunctional}
 and by the observation
on the vanishing of the kernel of the CR operator above,
$\overline{\mathcal{M}}(P _{f _{\gamma}}, A  )$ can be regularized in such a way
that it still consists only of $\{\sigma _{\max} \}$. In particular
the associated parametric Gromov-Witten invariant is $\pm 1$, with the
sign depending on how one chooses to orient the local unstable manifold.
\begin{remark}
Parametric
here is just emphasizing that we are not counting holomorphic curves
in a symplectic manifold, but in a total space of a family of
symplectic manifolds, but it is still a count in the usual sense of
counting (with signs)
zeros of a section of an associated Banach bundle. We refer the reader
to \cite{citeSavelyevQuantumcharacteristicclassesandtheHofermetric.}
for further elaboration.
\end{remark}
The
theorem follows, as we cannot deform the cycle $f _{\gamma} $ to a
cycle $f' _{\gamma} $ lying
in any lower sublevel set as that would preclude any $A$ class
elements $(u,b)$
from existing in  $\overline{\mathcal{M}}(P _{f' _{\gamma}}, A  )$, since as we
indicated above such an element gives a lower bound of exactly $L ^{+}(\gamma) $
on $L ^{+}(f' _{\gamma} (b))$ which would be absurd.
\end{proof}
\begin{proof} [Proof of Theorem \ref{thm:dynamical}] 
   Let's first do
   the case of $\ham$. Suppose not, then we first observe that $\gamma$ must be contractible as the
   minimal  $L ^{+} $ length of a non-contractible geodesic is $\hbar/2$.
   This is well known but can itself be deduced from Theorem
   \ref{thm:localunstable}, as the Hamiltonian $S ^{1} $ action on $S
   ^{2} $ representing generator of $\pi_1 (\ham)$ satisfies the
   hypothesis, has positive Hofer length $\hbar/2$, and must be minimizing by the
   theorem.
   Next  suppose that $\gamma$ is a contractible quasi-integrable Ustilovsky
   geodesic and that the Morse index  of
   $\gamma$ is 0, in this case by Theorem \ref{thm:localunstable} 
   $\gamma$ is minimizing, which is absurd unless $\gamma$ is constant, so $\gamma$ has positive
   Morse index $k$.
   %
Then again by Theorem \ref{thm:localunstable}:
\begin{equation*} \pi _{k} (\Omega ^{\hbar/2 }
   \ham, \Omega \ham _{E}  ) \neq
   0, 
\end{equation*} for some $E < L ^{+}(\gamma) < \hbar/2$.
Then using Theorem
 \ref{thm:contractibleloops}, 
we get that the inclusion map of $\Omega \ham _{E}$ into
$\Omega ^{\hbar/2} \ham $ is contractible from which it follows that
\begin{equation*} \pi _{k} (\Omega ^{\hbar/2} 
   \ham) \neq
   0, 
\end{equation*} but this contradicts Theorem \ref{thm:contractibleloops}.

For the case of $M=\Sigma$, if the index of a quasi-integrable Ustilovsky
geodesic $\gamma$ is 0 then again by
Theorem \ref{thm:localunstable} $\gamma$ is minimizing which can only happen
if $\gamma$  is constant, as $\pi _{1} (\hams)=0 $. 

For a general $L ^{+} $ Ustilovsky geodesic
$\gamma$ with positive Morse index $k$ we get that
\begin{equation*} \pi _{k} (\Omega  
   \hams, \Omega   \hams _{E}  ) \neq
   0, 
\end{equation*} for some $E < L ^{+}(\gamma)$,
then using
Theorem \ref{thm:contractibleloops}, 
we get  that
\begin{equation*} \pi _{k} (\Omega  
   \hams) \neq
   0, 
\end{equation*} but this is impossible since 
$\hams$ is contractible.
 \end {proof}
\section {Acknowledgements}
The paper was completed at ICMAT Madrid,
while the author was a research fellow, with funding from Severo Ochoa grant and
Viktor Ginzburg laboratory. I am grateful to Alexander Shnirelman for
kindly listening to some ideas on the subject, and helpful comments on
the analogous situations in $L ^{2} $ geometry.
  To Dusa McDuff for interest, and much help in
 in clarifying the paper. Leonid Polterovich for reference on rough $CAT(0)$
 spaces, and some related post discussions. Viktor Ginzburg  for
 interest and 
 comments. As well as the anonymous referees for wonderful reading and
 catching some serious issues.
 \bibliographystyle{siam}  
 \bibliography{/root/texmf/bibtex/bib/link} 
\end{document}